\newcommand{\prs}{\langle\;,\;\rangle}
\newcommand{\too}{\longrightarrow}
\newcommand{\om}{\omega}
\newcommand{\esp}{\quad\mbox{and}\quad}
\def\br{[\;,\;]}
\newcommand{\G}{{\mathfrak{g}}}
\newcommand{\ad}{{\mathrm{ad}}}
\newcommand{\B}{{\cal B}}
\newcommand{\di}{\displaystyle}
\newcommand{\na}{\nabla}
\newcommand{\al}{\alpha}
\newcommand{\be}{\beta}
\newcommand{\ga}{\gamma}
\newcommand{\e}{\epsilon}
\newcommand{\ch}{\check{e}}
\newcommand{\la}{\lambda}
\newcommand{\De}{\Delta}
\newcommand{\de}{\delta}
\font\bb=msbm10
\def\B{\hbox{\bb B}}
\def\R{\hbox{\bb R}}
\def\N{\hbox{\bb N}}
\def\C{\hbox{\bb C}}
\newtheorem{theo}{Theorem}[section]
\newtheorem{pr}{Proposition}[section]
\newtheorem{Le}{Lemma}[section]
\newtheorem{rem}{Remark}[section]
\newtheorem{problem}{Problem}
\begin{document}
\begin{frontmatter}




\title{Poisson algebras and symmetric Leibniz bialgebra structures on oscillator Lie algebras}

\author[label1]{H. Albuquerque}
\address[label1]{CMUC, Departamento de Matem\'atica, Universidade de Coimbra, Apartado 3008
EC Santa Cruz
3001  501 Coimbra, Portugal\\e-mail: lena@mat.uc.pt \hspace{0.2cm} mefb@mat.uc.pt }

\fntext[label1]{The first and second authors were supported by the Centre for Mathematics of the
University of Coimbra -- UID/MAT/00324/2019, funded by the Portuguese
Government through FCT/MEC and co-funded by the European Regional
Development Fund through the Partnership Agreement PT2020. Fifth author was supported by the PCI of the UCA ``Teor\'\i a de Lie y Teor\'\i a de Espacios de Banach'', by the PAI with project numbers FQM298, FQM7156 and by the project of the Spanish Ministerio de Educaci\'on y Ciencia  MTM2013-41208P.}

\author[label1]{E. Barreiro}

\author[label2]{S. Benayadi}
\address[label2]{Laboratoire de Math\'{e}matiques IECL UMR CNRS 7502,
Universit\'{e} de Lorraine, 3 rue Augustin Fresnel, BP 45112,
F-57073 Metz Cedex 03, France\\e-mail: said.benayadi@univ-lorraine.fr}

\author[label3]{M. Boucetta}
\address[label3]{Universit\'e Cadi-Ayyad, Facult\'e des Sciences et Techniques, BP 549 Marrakech, Morocco\\e-mail: m.boucetta@uca.ac.ma}

\author[label4]{J.M. S\'{a}nchez}
\address[label4]{Departamento de Matematicas, Universidad de C\'{a}diz, Puerto Real, C\'{a}diz, Spain \\e-mail: txema.sanchez@uca.es}


\begin{abstract}
	Oscillator Lie algebras are the only non commutative  solvable Lie algebras which carry a bi-invariant Lorentzian metric. In this paper,
we determine all the Poisson structures,  and in particular, all   symmetric Leibniz  algebra structures whose underlying Lie algebra is an oscillator Lie algebra.     We give also all  the symmetric Leibniz bialgebra structures whose underlying Lie bialgebra structure is a Lie bialgebra structure on an oscillator Lie algebra. We derive some geometric consequences on oscillator Lie groups.
\end{abstract}

\begin{keyword}
Oscillator Lie algebras \sep Symmetric Leibniz algebras \sep Leibniz bialgebras

\MSC 53C50 \sep 53D15 \sep 53B25
\end{keyword}
\end{frontmatter}







\section{Introduction}

A {\it Poisson algebra} is a finite dimensional Lie algebra $(\G,[\;,\;])$
endowed
with a commutative and associative product $\circ$ such that, for any $u,v,w\in\G$,
\begin{equation}
\label{eq0}[u,v\circ w]=[u,v]\circ w+v\circ [u,w].
\end{equation}
An algebra $(\mathrm{A},.)$ is called {\it Poisson admissible } if
$(\mathrm{A},[\;,\;],\circ)$ is a  Poisson algebra, where
\begin{equation}\label{oh}
[u,v]=u.v-v.u\esp u\circ v=\frac12(u.v+v.u).\end{equation}
Poisson algebras constitute an interesting topic in algebra and were studied by many
authors (see for instance \cite{bb, bai, GozRem,  MarRem, Shestakov}).

A {\it symmetric Leibniz algebra} is an algebra $(\mathrm{A},.)$ such that for any
$u\in\mathrm{A}$, the left multiplication $\mathrm{L}_u$ (i.e., $\mathrm{L}_u(v)=[u,v]$ for $v\in\mathrm{A}$) and the right multiplication $\mathrm{R}_u$ (i.e., $\mathrm{R}_u(v)=[v,u]$ for $v\in\mathrm{A}$) are derivations,
meaning that, for any $v,w\in\mathrm{A}$,
$$ u.(v.w)=(u.v).w+v.(u.w)\esp  (v.w).u=(v.u).w+v.(w.u).$$
It is well-known (see \cite{bb}) that if $(A,.)$ is a symmetric Leibniz algebra then it is Poisson admissible. Any Lie algebra is obviously a symmetric Leibniz algebra, however, the class of symmetric Leibniz algebras contains strictly the class of Lie
algebras. In \cite[Proposition 2.11]{be}, a useful characterization of symmetric Leibniz algebras is given. Note that symmetric Leibniz algebras constitute a subclass of left (right) Leibniz algebras introduced by Bloh \cite{bloh, bloh1} under the name of D-algebras and  rediscovered by  Loday \cite{loday}.

As a Lie bialgebra is a Lie algebra $\G$ and a Lie bracket on its dual $\G^*$ which are compatible in some sens (see Section \ref{section2}), a symmetric Leibniz bialgebra is a symmetric Leibniz algebra $A$ with a symmetric Leibniz product on its dual $A^*$ which are compatible.  The notion of Lie bialgebras was
introduced and studied by Drinfeld in \cite{dr} and symmetric Leibniz bialgebras were characterized by Barreiro and  Benayadi in 2016 (see \cite{be}). Note that any symmetric Leibniz bialgebra has an underlying structure of Lie bialgebra.

In this paper, we deal with the following problem which is natural by virtue of what above:
\begin{problem} Given a Lie algebra $\G$ (resp. a Lie bialgebra), determine all the Poisson structures on $\G$ (resp. symmetric Leibniz bialgebras) whose underlying Lie algebra (resp. Lie bialgebra) is $\G$.
	
\end{problem}
This problem has been solved for semi-simple Lie algebras since it was shown in \cite{bb, GozRem} that there is no non trivial Poisson structure on semi-simple Lie algebras and the purpose of this paper is to solve this problem for the class
of oscillator Lie algebras which as the semi-simple Lie algebras belong to the class of quadratic Lie algebras. Recall that a quadratic Lie algebra is a Lie algebra $(\G,\br)$ with a nondegenerate bilinear symmetric form $\prs$ which is bi-invariant, i.e.,
\[ \langle[u,v],w\rangle+\langle[u,w],v\rangle=0,\quad u,v,w\in\G. \]
The oscillator Lie algebras are the only non commutative  solvable Lie algebras which carry a bi-invariant Lorentzian metric \cite{m}. They are the Lie algebras of oscillator Lie groups. The oscillator group named so by Streater in \cite{streater}, as a four-dimensional connected, simply connected
Lie group, whose Lie algebra (know as the oscillator algebra) coincides with the one generated
by the differential operators, acting on functions of one variable, associated to the harmonic
oscillator problem. The oscillator group has been generalized in any even dimension $2n \geq 4$,
and several aspects of its geometry have been intensively studied, both in differential geometry
and in mathematical physics. Here are  few examples of studies on oscillator groups:  Lie bialgebras structures and Yang-Baxter equation \cite{bm},  Einstein-Yang-Mills equations \cite{duran}, parallel hypersurfaces \cite{calvaruso}, homogeneous structures \cite{gadea}, electromagnetic waves
\cite{levi}, the Laplace-Beltrami operator \cite{muller}.

For $n \in \N^*$ and $\la = (\la_1,\ldots,\la_n) \in \R^n$ with $0 < \la_1 \leq \cdots \leq \la_n$, the $\la$-oscillator group, denoted by $G_\la$, is the Lie group with the underlying manifold $\R^{2n+2} = \R \times \R\times \C^n$ and product $$(t,s,z).(t',s',z') = \Bigl( t+t',s+s'+\frac12 \sum_{j=1}^n \mbox{Im }[\bar{z}_j \exp(it\la_j)z_j'],
\ldots, z_j+\exp(it\la_j)z'_j, \ldots \Bigr) .$$
It is Lie algebra is $\G_\la$ is $\R \times \R\times \C^n$ with its canonical basis
$\B = \left\{e_{-1},e_0,e_j,\ch_j\right\}_{j=1,\ldots,n}$ with
\[ e_{-1}=(1,0,0),e_0=(0,1,0),\; e_j=(0,0,(0,\ldots,1,\ldots,0))\esp \ch_j=
(0,0,(0,\ldots,\imath,\ldots,0)). \]
 and the Lie brackets  are given by
\begin{equation}\label{bracket}
[e_{-1},e_i] = \la_i \ch_i, \qquad[e_{-1},\ch_i] = -\la_ie_i, \qquad[e_i,\ch_i] = e_0,
\end{equation}
for $i=1,\ldots,n$, the unspecified products are either given by antisymmetry or  zero.  For $x\in\G_\la$, let
$$x=x_{-1}e_{-1}+x_0e_0+\sum_{i=1}^n(x_ie_i+\check{x}_i\check{e}_i).$$
The nondegenerate symmetric bilinear form
\begin{equation}\label{lorentz}\textbf{k}_\la(x,x):=2x_{-1}x_0+\sum_{j=1}^n\frac1{\la_j}(x_j^2+\check{x}_j^2)\end{equation}satisfies
$$\textbf{k}_\la([x,y],z)+\textbf{k}_\la(y,[x,z])=0,\quad\mbox{for
	any}\quad x,y,z\in\G_\la$$and hence defines  a
Lorentzian bi-invariant metric on $\G_\la$.

The main results of this paper are:
\begin{enumerate}
	\item The determination of all the Poisson structures whose underlying Lie algebra is an oscillator Lie algebra (see Theorem \ref{poisson}). We show that contrary to the semi-simple case, there is a family depending on a one parameter of non trivial Poisson structures on any given oscillator Lie algebra. Moreover, these structure are symmetric Leibniz structures and are invariant with respect to $\textbf{k}_\la$.
	\item The determination of all symmetric Leibniz bialgebras structures whose underlying Lie bialgebra is a bialgebra structure on an oscillator Lie algebra (see Theorem \ref{bel}).
	\item The use of the   algebraic results found in 1. and 2. to derive some geometric consequences at the level of the oscillator Lie groups. Indeed, it has been shown in \cite{bb} that if $G$ is a Lie group and $(\G,\br)$ its Lie algebra then there is  a bijection
	between the set of Poisson structures on   $(\G,\br)$ and the space of
	bi-invariant torsion free linear connections  on $G$ which have the same curvature
	as $\na^0$ where $\na^0$ is the bi-invariant connection on $G$ given by $\na^0_XY=\frac12[X,Y]$. Moreover, if the Poisson structure is symmetric Leibniz the corresponding bi-invariant connection is locally symmetric and has the same holonomy algebra as $\na^0$. In the case of an oscillator Lie algebra $\G_\la$, the connection $\na^0$ is the Levi-Civita connection of the left invariant Lorentzian metric associated to $\textbf{k}_\la$. So any non trivial Poisson structure on $\G_\la$ determine a bi-invariant connection on $G_\la$ which has the same geometric properties of $\na^0$ except one, namely, it does not preserves the metric (see Section \ref{section5}).
\end{enumerate}

The paper is organized as follows. In Section \ref{section2}, we give some known results needed in the paper. We give also some new results on oscillator Lie algebras, namely, some type of its derivations (see Lemma \ref{derexp}) and its quadratic dimension (see Lemma \ref{le1}).  Sections \ref{section3}-\ref{section5} are devoted to the results enumerated above.

{\bf Notations and conventions.} Let $V$ be a finite dimensional vector space. We identify $V\otimes V$ to the space of bilinear forms on $V^*$,  $V\wedge V$  and $V\odot V$, respectively,  the subspaces of skew-symmetric (resp. symmetric) bilinear forms on $V^*$. For any $u,v\in V$, $u\otimes v$ is the element of $V\otimes V$ given by
\[ (u\otimes v)(\al,\be)=\al(u)\be(v),\quad \al,\be\in V^*. \]
We put also
\[ u\wedge v=u\otimes v-v\otimes u\esp u\odot v=u\otimes v+v\otimes u.  \]
For any $J\in\mathrm{End}(V)$ and $r\in V\otimes V$ we denote by $J(r)$ or $Jr$ the element of $V\otimes V$ given by
\[ J(r)(\al,\be)=r(J^*\al,\be)+r(\al, J^*\be),\quad \al,\be\in V^*. \]
Note that $J(u\otimes v)=J(u)\otimes v+u\otimes J(v)$.


\section{Preliminaries}\label{section2}

In this section, we recall the definitions of  Lie bialgebras and symmetric Leibniz bialgebras and their characterization given in \cite{be}. We give the characterization of the Lie bialgebras structures on oscillator Lie algebras given in \cite{bm}. We prove two lemmas interesting in their owns and which will be useful later.

\subsection{Lie bialgebras and symmetric Leibniz bialgebras}  Let $(\mathcal{A},\centerdot)$ be an algebra and denote by   ${\mathrm{L}}_u$ and ${\mathrm{R}}_u$, respectively, the left and the right multiplication by $u$. We define ${\mathrm{L}}^*,{\mathrm{R}}^*:\mathcal{A}\too\mathrm{End}(\mathcal{A}^*)$ by
\[ \prec\mathrm{L}^*_x\al,y\succ=\prec\al,{\mathrm{R}}_xy \succ
\esp \prec\mathrm{R}^*_x\al,y\succ=\prec\al,{\mathrm{L}}_xy \succ.\]

Suppose that we have also an algebra product $\circ$ on the dual $\mathcal{A}^*$ defined by a coproduct
$\De:\mathcal{A}\too \mathcal{A}\otimes \mathcal{A}$.  Define on $\Phi(\mathcal{A})=\mathcal{A}\oplus \mathcal{A}^*$ the product $'.'$ by
\begin{equation} \label{eq1}(x+\al).(y+\be)=x\centerdot y+{\mathrm{L}}_\al^* y+{\mathrm{R}}_\be^*x+\al\circ\be +{\mathrm{L}}_x^*\be+{\mathrm{R}}_y^*\al,\; x,y\in \mathcal{A},\al,\be\in \mathcal{A}^*, \end{equation}where $x,y\in \mathcal{A}$ and $\al,\be\in \mathcal{A}^*$.
Thus $(\Phi(\mathcal{A}),.)$ is an algebra and both $(\mathcal{A},\centerdot)$ and $(\mathcal{A}^*,\circ)$ are subalgebras of $\Phi(\mathcal{A})$. Moreover, the product $'.'$ satisfies
\[ \mathcal{B}(u.v,w)=\mathcal{B}(u,v.w),\; u,v,w\in \Phi(\mathcal{A}), \]where $\mathcal{B}(x+\al,y+\be)=\prec \al,y\succ+\prec\be,x\succ$.

 Suppose that $(\mathcal{A},\centerdot)$ is a Lie algebra (resp. a symmetric Leibniz algebra). The triple $(\mathcal{A},\centerdot,\De)$ is a called a Lie bialgebra  (resp. a symmetric Leibniz bialgebra) if $(\Phi(\mathcal{A}),.)$ is a Lie algebra (resp. a symmetric Leibniz algebra).

The following well-known result characterizes Lie bialgebras.
\begin{pr} Let $(\G,\br)$ be a Lie algebra and $\De:\G\too\otimes^2\G$ a coproduct. Then $(\G,\br,\De)$ is a Lie bialgebra if and only if $\De^*:\G^*\otimes\G^*\too\G^*$ is a Lie bracket on $\G^*$ and $\De$ is 2-cocycle with respect to the adjoint action, i.e.,
	\[ \De([u,v])=\ad_u\De(v)-\ad_v\De(u)\quad u,v\in\G. \]
	
\end{pr}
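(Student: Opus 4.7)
The plan is to characterize when the product on $\Phi(\G)=\G\oplus\G^*$ defined by $(\ref{eq1})$ is a Lie bracket by separately verifying skew-symmetry and the Jacobi identity, and then reading off from each the condition it imposes on $\De$. The construction of $\Phi(\G)$ is arranged so that $(\G,[\cdot,\cdot])$ and $(\G^*,\circ=\De^*)$ are subalgebras and $\mathcal{B}$ is associatively invariant, so the real content comes from the mixed interaction terms.

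First I would handle skew-symmetry. Expanding $(x+\al).(y+\be)+(y+\be).(x+\al)$ via $(\ref{eq1})$ and splitting the result into its $\G$- and $\G^*$-components, the hypothesis that $[\cdot,\cdot]$ is Lie on $\G$ gives $L^*_x+R^*_x=0$ on $\G^*$, and the vanishing of the remaining terms reduces to the analogous identity $L^*_\al+R^*_\al=0$ on $\G$ for every $\al\in\G^*$. This is equivalent to $\circ=\De^*$ being skew-symmetric, equivalently $\De(\G)\subset\G\we\G$, which is one half of the requirement that $\De^*$ be a Lie bracket on $\G^*$.

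For the Jacobi identity I would use multilinearity to split into cases based on the number of arguments in $\G$ versus $\G^*$. The case of three $\G$-entries is Jacobi on $\G$, given. The case of three $\G^*$-entries is Jacobi on $(\G^*,\De^*)$, which provides the other half of $\De^*$ being a Lie bracket. The crucial mixed case takes $x,y\in\G$ and $\gamma\in\G^*$; expanding the Jacobiator
\[ J(x,y,\gamma)=[[x,y],\gamma]+[[y,\gamma],x]+[[\gamma,x],y] \]
using $(\ref{eq1})$ and projecting onto $\G^*$, the result collapses to $L^*_{[x,y]}\gamma-[L^*_x,L^*_y]\gamma$, which vanishes automatically by the Jacobi identity on $\G$ (equivalently, by $L^*:\G\too\mathrm{End}(\G^*)$ being the coadjoint representation). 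Projecting onto $\G$ and pairing against an arbitrary $\de\in\G^*$, the resulting scalar, after simplification via the defining formulas for $L^*$ and $R^*$, should be $\bigl(\De([x,y])-\ad_x\De(y)+\ad_y\De(x)\bigr)(\gamma,\de)$, so its vanishing for all $x,y,\gamma,\de$ is exactly the 2-cocycle condition on $\De$. The remaining mixed case with one $\G$-entry and two $\G^*$-entries is equivalent, via the $\G\leftrightarrow\G^*$ symmetry of $\Phi(\G)$, to the dual pairing of the same 2-cocycle and therefore yields no new condition.

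The main obstacle is the bookkeeping in the mixed expansion: each cyclic term in $J(x,y,\gamma)$ breaks into a $\G$-part and a $\G^*$-part containing several nested coadjoint-type contributions such as $L^*_{L^*_y\gamma}x$ and $R^*_xL^*_y\gamma$, and identifying the $\G$-component paired with $\de$ with the 2-cocycle expression requires translating the paper's dual-multiplication notation back into adjoint-action language via identities like $\de(L^*_\eta u)=(\de\circ\eta)(u)$ and $\de([u,v])=-(L^*_u\de)(v)$. Once this dictionary is in place the verification is direct, and the two conditions together give exactly the stated characterization of Lie bialgebras.
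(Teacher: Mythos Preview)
The paper does not supply a proof of this proposition; it is quoted as a ``well-known result'' and left unproved, so there is no paper argument to compare against. Your outline is the standard direct verification on the double $\Phi(\G)$ and is correct: skew-symmetry of the product \eqref{eq1} forces $\De(\G)\subset\G\we\G$, the pure $\G^*$ Jacobiator gives the Jacobi identity for $\De^*$, and the $\G$-component of the $(2,1)$ mixed Jacobiator paired with $\de$ is exactly the $1$-cocycle identity for $\De$.

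The only place that deserves an explicit line in a full write-up is your dismissal of the mixed case with one $\G$-entry and two $\G^*$-entries. The $\G\leftrightarrow\G^*$ symmetry of \eqref{eq1} does convert this case into the dual cocycle condition (that the map $\G^*\to\G^*\we\G^*$ transpose to $\br$ is a $1$-cocycle for the adjoint action of $(\G^*,\De^*)$), but the assertion that this ``yields no new condition'' still requires the observation that pairing this identity against $x\otimes y$ reproduces, term for term, the pairing of the $\De$-cocycle identity against $\gamma\otimes\de$. Once that four-linear identification is written down, your argument is complete.
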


The characterization of symmetric Leibniz bialgebras was done in \cite{be} and it is more subtle. We present now this characterization.

 Let $({\frak L},.)$ be a symmetric Leibniz algebra equipped with a coproduct $\De$. We put $$[x,y] := \frac12(x.y - y.x), \hspace{0.4cm} x \bullet y := \frac12(x.y+ y.x),$$ for any $x,y \in {\frak L}.$  Then $({\frak L},[\;,\;])$ is a Lie algebra and $({\frak L},\bullet)$ is a commutative associative algebra. For $x,y,z \in \frak L$ we define $$x\cdot_a(y \otimes z) := (x \bullet y) \otimes z, \hspace{0.4cm} (y \otimes z)\cdot_ax := y \otimes(z \bullet x),$$ $$x\cdot_{\frak L}(y \otimes z) := ([x,y]) \otimes z, \hspace{0.4cm} (y \otimes z)\cdot_{\frak L}x := y \otimes([z, x]).$$
We also define two new coproducts on ${\frak L}$: $$\De_{\frak L} := \frac12(\De - \tau \circ \De), \hspace{0.4cm} \De_a := \frac12(\De + \tau \circ \De),$$ where, we use the twist map $\tau(x \otimes y) = y \otimes x$ for $x,y \in {\frak L}.$
With these notations  in mind, we have the following useful characterization of symmetric Leibniz bialgebras.
\begin{theo}{\rm\cite[Theorem 3.8]{be}}\label{be} Let $(\mathfrak{L},.)$ be a symmetric Leibniz algebra and $\De$ a coproduct on $\mathfrak{L}$. The triple $(\mathfrak{L},.,\De)$ is a symmetric Leibniz bialgebra if and only if
	\begin{enumerate}
		\item $(\mathfrak{L},[\;,\;],\De_{\mathfrak{L}})$ is a Lie bialgebra,
		\item $(\De_a\otimes I_{\frak L}) \circ \De_a = (\De_a \otimes I_{\frak L}) \circ \De_{\frak L} = (\De_{\frak L} \otimes I_{\frak L}) \circ \De_a = 0,$
		\item $\De_a(x \bullet y) = \De_{\frak L}(x \bullet y) = \De_a([x,y]) = 0,$
		\item $y\cdot_a\De_a(x) + \De_a(y)\cdot_ax = 0,$
		\item $x\cdot_a\De_{\frak L}(y) - \De_a(x)\cdot_{\frak L}y = 0,$
		\item $x\cdot_{\frak L}\De_a(y) - \De_{\frak L}(x)\cdot_ay = 0,$
	\end{enumerate}
	for any $x, y \in \frak L.$
\end{theo}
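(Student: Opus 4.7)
The plan is to exploit the decomposition of any symmetric Leibniz algebra product into its antisymmetric part (a Lie bracket) and its symmetric part (a commutative associative product), and to impose the symmetric Leibniz axioms on the double $\Phi(\mathfrak{L}) = \mathfrak{L} \oplus \mathfrak{L}^*$ equipped with the product \eqref{eq1}. First I would invoke the characterization \cite[Proposition 2.11]{be}, which states that an algebra is symmetric Leibniz if and only if its skew-symmetrisation is a Lie algebra, its symmetrisation is a commutative associative algebra, and the Lie bracket is a two-sided derivation of the associative product. Applying this to $\Phi(\mathfrak{L})$ turns the bialgebra axiom into three distinct packages of equations that I can analyse separately.

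Next I would compute these three structures on $\Phi(\mathfrak{L})$ explicitly in terms of $[\;,\;]$, $\bullet$, $\De_{\mathfrak{L}}$, and $\De_a$. The skew-symmetrisation of \eqref{eq1} produces the standard Drinfeld-double bracket built from $[\;,\;]$ on $\mathfrak{L}$, from $\De_{\mathfrak{L}}^*$ on $\mathfrak{L}^*$, and from the coadjoint-type actions between them; requiring that this be a Lie algebra is exactly the Lie bialgebra condition $(1)$. The symmetrisation of \eqref{eq1} involves $\bullet$ on $\mathfrak{L}$, $\De_a^*$ on $\mathfrak{L}^*$, and the $\mathrm{L}^*,\mathrm{R}^*$ operators associated to $\bullet$; commutativity is automatic from the symmetrisation, so what remains is associativity of this symmetrised product together with the two-sided derivation property of the Lie bracket on $\Phi(\mathfrak{L})$.

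I would then test associativity and the derivation property on all triples $(u,v,w) \in (\mathfrak{L}\cup\mathfrak{L}^*)^3$. Triples entirely inside $\mathfrak{L}$ or entirely inside $\mathfrak{L}^*$ just reproduce the symmetric Leibniz axioms on each factor. The mixed triples, once dualised via $\prec\mathrm{L}^*_x\al,y\succ=\prec\al,\mathrm{R}_xy\succ$ and its counterpart, decompose into relations that live in $\mathfrak{L}\otimes\mathfrak{L}$, $\mathfrak{L}\otimes\mathfrak{L}^*$ and $\mathfrak{L}^*\otimes\mathfrak{L}^*$. Projecting every coproduct onto its symmetric and antisymmetric components, the vanishing conditions $(2)$ and $(3)$ emerge from triples whose symmetry type forbids any surviving contribution, while the intertwining identities $(4)$--$(6)$ emerge from the remaining mixed-parity triples and link $\cdot_a$ with $\De_a$ and $\cdot_{\mathfrak{L}}$ with $\De_{\mathfrak{L}}$ in the pattern stated.

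The main obstacle will be the bookkeeping in the mixed triples: each associativity or derivation equation on $\Phi(\mathfrak{L})$ splits into several tensorial pieces, and one must check both that these pieces assemble without redundancy into exactly the six listed conditions and, conversely, that those six conditions suffice to recover every mixed relation. The cleanest way to manage this is to use the symmetry type ($\De_a$ versus $\De_{\mathfrak{L}}$ on the coproduct side, $\bullet$ versus $[\;,\;]$ on the product side) as a bigrading on both sides of every equation, so that matching parities dictates which of $(1)$--$(6)$ each piece belongs to and makes the equivalence transparent.
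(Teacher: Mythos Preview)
The paper does not prove this theorem; it is quoted verbatim from \cite[Theorem 3.8]{be} and only accompanied by Remark~\ref{rem:be} correcting a misprint, so there is no ``paper's own proof'' to compare against. Your outline is the natural strategy---unfold the definition that $\Phi(\mathfrak{L})$ be symmetric Leibniz via \cite[Proposition~2.11]{be}, then separate the resulting identities by parity---and is almost certainly what the proof in the cited reference \cite{be} does as well.
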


\begin{rem}\label{rem:be} In the original publication of the paper \cite{be} the statement of  Theorem 3.8 has a misprint. A corrected version of Theorem 3.8 in \cite{be} is provided in the present paper, where the Condition 1. of the Theorem is rephrased as  ``$(\mathfrak{L},[\;,\;],\De_{\mathfrak{L}})$ is a Lie bialgebra''.
\end{rem}

\subsection{Lie bialgebra structures on the oscillator Lie algebras}

For $n \in \N^*$ and $\la = (\la_1,\ldots,\la_n) \in \R^n$ with $0 < \la_1 \leq \cdots \leq \la_n$, let
 $\G_\la$ be the oscillator Lie algebra defined in \eqref{bracket}.
We denote by $S$ the $\R$-vector subspace of $\G_\la$ spanned by $\{e_i,\ch_i\}_{i=1,\ldots,n}$ and by $\om$ the 2-form on $\G_\la$ given by $$i_{e_{-1}} \om = i_{e_0}\om = 0,\; \om(e_i,e_j) = \om(\ch_i,\ch_j) = 0 \quad \mbox{and} \quad \om(e_i,\ch_j) = - \om(\ch_i,e_j) = \de_{ij},$$ for any $1 \leq i, j \leq n.$ The restriction of $\om$ to $S$ is a symplectic 2-form (that is,  $\om$ is a  skew-symmetry nondegenerate bilinear form of $S$) and, for any $u, v \in S$,
\begin{equation}\label{bracket1}
[u,v] = \om(u,v)e_0.
\end{equation}
  We denote by $D$ the restriction of $\ad_{e_{-1}}$ to $S,$ that is, $D(u) = [e_{-1},u]$ for all $u \in S.$ Clearly, $D$ is an isomorphism which is skew-symmetric with respect $\om$, meaning that
\begin{equation}\label{ad}
\om(D(u),v) + \om(u,D(v)) = 0,
\end{equation}
for any $u,v \in S.$

We call  $\G_\la$, \emph{generic} if the scalars satisfy $0 < \la_1 <\cdots < \la_n$ and $\la_k \neq \la_i + \la_j$, for any $1 \leq i < j < k \leq n.$

The following lemma appeared first in \cite{bm} without proof, we give its proof here.

\begin{Le}\label{derexp} If $0<\la_1<\ldots<\la_n$ and  $J :\G_\la\too\G_\la$ is a derivation satisfying  $J(e_0)=J(e_{-1})=0$ then there exists $(\mu_1,\ldots,\mu_n) \in \R^n$ such that,
	$$J (e_i)=  \mu_{i}  \ch_i \mbox{ and }J (\ch_i)=- \mu_{i} e_i ,$$
	for any $i \in \{1, \ldots,n\}$.
\end{Le}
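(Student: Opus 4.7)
The plan is to expand $J$ in coordinates and successively extract constraints from the derivation identity applied to the three families of brackets $[e_{-1},e_i]$, $[e_{-1},\ch_i]$, and $[e_i,\ch_i]$.

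First, I would write, for each $i$,
\[ J(e_i)=\alpha_i e_{-1}+\beta_i e_0+\sum_j(a_{ij}e_j+b_{ij}\ch_j), \qquad J(\ch_i)=\gamma_i e_{-1}+\delta_i e_0+\sum_j(c_{ij}e_j+d_{ij}\ch_j). \]
Applying $J$ to $[e_{-1},e_i]=\la_i\ch_i$ and using $J(e_{-1})=0$ gives $\la_iJ(\ch_i)=[e_{-1},J(e_i)]$. Since $[e_{-1},e_{-1}]=[e_{-1},e_0]=0$, the right-hand side lies entirely in $S$, so already $\gamma_i=\delta_i=0$, and comparing coefficients produces
\[ c_{ij}=-\tfrac{\la_j}{\la_i}b_{ij}, \qquad d_{ij}=\tfrac{\la_j}{\la_i}a_{ij}. \]
Symmetrically, applying $J$ to $[e_{-1},\ch_i]=-\la_ie_i$ gives $-\la_iJ(e_i)=[e_{-1},J(\ch_i)]$, which forces $\alpha_i=\beta_i=0$ and
\[ c_{ij}=-\tfrac{\la_i}{\la_j}b_{ij}, \qquad d_{ij}=\tfrac{\la_i}{\la_j}a_{ij}. \]

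Combining the two systems yields $(\la_i^2-\la_j^2)a_{ij}=0$ and $(\la_i^2-\la_j^2)b_{ij}=0$. Since $0<\la_1<\cdots<\la_n$, the factor $\la_i^2-\la_j^2$ is nonzero whenever $i\neq j$, so $a_{ij}=b_{ij}=0$ for $i\neq j$. Writing $\nu_i:=a_{ii}$ and $\mu_i:=b_{ii}$, the relations above then force $c_{ii}=-\mu_i$ and $d_{ii}=\nu_i$, so
\[ J(e_i)=\nu_i e_i+\mu_i\ch_i, \qquad J(\ch_i)=-\mu_i e_i+\nu_i\ch_i. \]

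Finally, I would apply $J$ to the remaining bracket $[e_i,\ch_i]=e_0$: since $J(e_0)=0$, the derivation rule gives
\[ 0=[J(e_i),\ch_i]+[e_i,J(\ch_i)]=\nu_i[e_i,\ch_i]+\nu_i[e_i,\ch_i]=2\nu_i e_0, \]
so $\nu_i=0$. (The remaining brackets $[e_i,e_j]=[\ch_i,\ch_j]=[e_i,\ch_j]=0$ for $i\neq j$ are automatically preserved.) This leaves exactly $J(e_i)=\mu_i\ch_i$ and $J(\ch_i)=-\mu_i e_i$, as required. The only real input beyond routine bookkeeping is the genericity step $(\la_i^2-\la_j^2)\neq 0$ for $i\neq j$, which is where the strict ordering hypothesis is used; the rest is a coordinate computation organized by which element $e_{-1},\ch_i,e_i$ one brackets with.
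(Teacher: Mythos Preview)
Your proof is correct. The paper's argument is organized more structurally: it first observes that the derivation condition, under the hypotheses $J(e_0)=J(e_{-1})=0$, is equivalent to the pair of conditions $J\circ\ad_{e_{-1}}=\ad_{e_{-1}}\circ J$ and $\om(Ju,v)+\om(u,Jv)=0$ on $S$; then it uses the eigenspace decomposition of $\ad_{e_{-1}}^2$ (with distinct eigenvalues $-\la_i^2$) to force $J$ to preserve each plane $E_i=\mathrm{span}\{e_i,\ch_i\}$, commutation with the rotation $\ad_{e_{-1}}|_{E_i}$ to get the form $\begin{pmatrix}a&-c\\c&a\end{pmatrix}$, and finally $\om$-skewness to kill the diagonal part. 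Your coordinate computation encodes exactly the same three steps: your two applications of $J$ to $[e_{-1},e_i]$ and $[e_{-1},\ch_i]$ are the commutation condition written out, your use of $(\la_i^2-\la_j^2)\neq0$ is the eigenspace separation, and your final application to $[e_i,\ch_i]=e_0$ is the $\om$-skewness condition (since $[u,v]=\om(u,v)e_0$ on $S$). So the two proofs are the same argument in different clothing; the paper's version is a bit more conceptual and coordinate-free, yours is more elementary and self-contained.
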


\begin{proof} Since $J(e_0)=J(e_{-1})=0$,  $J$ is a derivation if and only if
	\[ J\circ\ad_{e_{-1}}=\ad_{e_{-1}}\circ J\esp \om(Ju,v)+\om(u,Jv)=0, \]
	Since $\ad_{e_{-1}}$ leaves invariant $S$ and its restriction to $S$ is bijective, the first relation implies that $J(S)\subset S$. The eigenvalues of the restriction of $\ad_{e_{-1}}^2$ to $S$ are $-\la_1^2>\ldots>-\la_n^2$ and the corresponding eigenspaces are $E_i=\mathrm{span}\{e_i,\ch_i\}$, $i=1,\ldots,n$.
	Now $J$ commutes with $\ad_{e_{-1}}^2$ and hence leaves invariant the $E_i$ for $i=1\ldots,n$. So in restriction to $E_i$ the matrices of $J$ and $\ad_{e_{-1}}$ are given, respectively, by
	\[ \left( \begin{array}{cc}a&b\\c&d\\
	\end{array} \right)\esp \left( \begin{array}{cc}0&-\la_i\\\la_i&0\\
	\end{array} \right). \] These matrices commute if and only if $a=d$ and $b=-c$. Moreover, the relation $\om(Je_i,\ch_i)+\om(e_i,J\ch_i)=0$ gives that $a=0$ which completes the proof.
\end{proof}

For any $\mu\in\R^n$ we will denote by $J^\mu$ the endomorphism of $\G_\la$ satisfying
\[ J^\mu(e_0)=J^\mu(e_{-1})=0, J^\mu(e_i)=\mu_i\ch_i\esp J^\mu(\ch_i)=-\mu_ie_i,\quad i=1,\ldots,n.  \]



For any $r \in\wedge^2\G_\la$ consider the endomorphism $r_{\#}:\G_\la^*\too\G_\la$ such that
$\be(r_{\#}(\al))=r(\al,\be)$, for any $\alpha, \beta  \in \G_\la^*$.
For any $r_1,r_2\in\wedge^2\G_\la$, let $\om_{r_1,r_2}$ be the element of $\wedge^2\G_\la$ defined by
$$\om_{r_1,r_2}(\al,\be)=\frac12\left(\om(r_{1\#}(\al),r_{2\#}(\be))+\om(r_{2\#}(\al),r_{1\#}(\be)\right),$$
for any $\alpha, \beta  \in \G_\la^*$. As usual, $\B^* = \left\{e_{-1}^*,e_0^*,e_i^*,\ch_i^*\right\}_{i=1,\ldots,n}$ is the dual basis of $\B$ and we denote  by $\wedge^2S$ the subspace of $\wedge^2\G_\la$ consisting of $r$ satisfying $r_{\#}(e_{-1}^*)=r_{\#}(e_{0}^*)=0$ and $S^*=\mathrm{span}\{e_i^*,\ch_i^*\}_{i=1,\ldots,n}$.

The Lie bialgebra structures on  generic oscillator Lie algebras  were given in  \cite{bm}. We state Theorem 1.1 in \cite{bm} with a slight improvement thank to Lemma \ref{derexp}.

\begin{theo}{\rm\cite[Theorem 1.1]{bm}} \label{bm} Let $\G_\la$ be a generic oscillator Lie algebra and  $\De :\G_\la\too \otimes^2 \G_\la$ a coproduct. Then $(\G_\la,\br,\De)$ is a Lie bialgebra  if and only if there exists $r\in\wedge^2S$, $u_0\in S$ and $\mu\in\R^n$ such that
\begin{eqnarray*}
&& \De(u)= \ad_ur+2e_0\wedge(( J^\mu+ \ad_{u_{0}} )(u)),
\end{eqnarray*}
and
\begin{equation}\label{bou}
\om_{r,\ad_{e_{-1}} r}-(J^\mu\circ \ad_{e_{-1}})  r=0.
\end{equation}
Moreover, in this case, the Lie bracket $\br^*=\De^*$ on $\G_\la^*$  satisfies
\begin{equation}\label{bracketmain1}\left\{\begin{array}{l}
\;[e_{0}^*,\al]^\ast =2(J^\mu)^*\al-2(\ad^*_{e_{-1}}\al)(u_0)e_{-1}^*+ i_{r_{\#}(\al)}\om,\\
\;[\al,\be]^\ast =\ad_{e_{-1}} r(\al,\be)e_{-1}^*,
\end{array}\right.
\end{equation}
for $\alpha, \beta \in S^*$ and $e_{-1}^*$ is a central element.
\end{theo}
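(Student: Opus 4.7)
The statement is an instance of the general problem of describing all $1$-cocycles $\Delta\colon\G_\la\to\wedge^2\G_\la$ (for the adjoint action) whose dual $\Delta^*$ satisfies the Jacobi identity. My plan is to first normalize $\Delta$ using the cocycle identity plus the gauge freedom of adding a coboundary $\delta r(u)=\ad_u r$, and then to read off the obstruction to co-Jacobi as the single algebraic equation \eqref{bou}.

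For the cocycle normalization, since $e_0$ is central in $\G_\la$, the cocycle identity at $[u,e_0]=0$ forces $\Delta(e_0)$ to be $\G_\la$-invariant in $\wedge^2\G_\la$. A direct eigenspace computation using the generic assumption on $\la$ shows that this invariant space is zero: the $\ad_{e_{-1}}$-kernel of $\wedge^2\G_\la$ is $\R\,e_{-1}\wedge e_0\oplus\bigoplus_i\R\,e_i\wedge\ch_i$, and then testing $\ad_{e_i}$ on this subspace kills everything. Hence $\Delta(e_0)=0$. Next, using the cocycle identity at $[e_i,\ch_i]=e_0$ together with the $\ad_{e_{-1}}$-equivariance coming from $[e_{-1},e_i]=\la_i\ch_i$ and $[e_{-1},\ch_i]=-\la_i e_i$, I would determine the components of $\Delta|_S$ in $\wedge^2 S$ to be exactly $\ad_u r$ for some $r\in\wedge^2 S$. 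Subtracting this coboundary leaves a residual cocycle $\Delta_1$ with values in $(e_{-1}\wedge\G_\la)\oplus(e_0\wedge\G_\la)$; the key identity $\ad_u(e_0\wedge v)=-e_0\wedge[u,v]$ (valid since $e_0$ is central), together with one more coboundary adjustment, allows me to write $\Delta_1(u)=2e_0\wedge T(u)$ for a linear map $T\colon\G_\la\to\G_\la$. The cocycle identity becomes equivalent to $T$ being a derivation; evaluating at $e_{-1}$ yields $T(e_{-1})=0$ up to an $\ad_{u_0}$ piece with $u_0 \in S$, which I keep as a separate summand to match the stated formula. Lemma \ref{derexp} then identifies the remaining part of $T$ with some $J^\mu$, so that $T=J^\mu+\ad_{u_0}$.

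For co-Jacobi, the formulas \eqref{bracketmain1} follow by dualizing the expression of $\Delta$ just obtained. Since $[\alpha,\beta]^*\in\R e_{-1}^*$ and $e_{-1}^*$ turns out to be central in $(\G_\la^*,[\,,\,]^*)$, all Jacobi triples collapse except those of the form $(e_0^*,\alpha,\beta)$ with $\alpha,\beta\in S^*$. Expanding the Jacobiator on such a triple via \eqref{bracketmain1} and collecting the coefficient of $e_{-1}^*$ produces, after using the skew-adjointness \eqref{ad} of $D=\ad_{e_{-1}}|_S$ with respect to $\om$, exactly $\om_{r,\ad_{e_{-1}}r}(\alpha,\beta)-(J^\mu\circ\ad_{e_{-1}})r(\alpha,\beta)=0$, which is \eqref{bou}. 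The main obstacle I anticipate is in the first step: keeping track across the successive coboundary normalizations of which parameters remain free, and verifying that the genericity of $\la$ is what rules out parasitic terms such as $e_i\wedge e_j$ or $\ch_i\wedge\ch_j$ in the residual $\Delta$, since it is there that the assumption $\la_k\ne\la_i+\la_j$ is indispensable for inverting $\ad_{e_{-1}}$ on the off-diagonal pieces of $\wedge^2\G_\la$.
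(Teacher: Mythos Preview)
The paper does not prove this theorem: it is quoted verbatim from \cite[Theorem~1.1]{bm}, and the only contribution here is the cosmetic reformulation of the derivation appearing in $\Delta$ as $J^\mu$ via Lemma~\ref{derexp}. There is therefore no ``paper's own proof'' to compare your proposal against; you are supplying an argument the present paper deliberately outsources.

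That said, your sketch follows the natural line (and, as far as one can tell without consulting \cite{bm}, presumably the original one): kill $\Delta(e_0)$ by invariance, peel off a coboundary $\ad_u r$, identify the residual cocycle with a derivation and invoke Lemma~\ref{derexp}, then read off \eqref{bou} from the sole nontrivial Jacobiator $(e_0^*,\alpha,\beta)$. One imprecision to flag: you write that ``the components of $\Delta|_S$ in $\wedge^2 S$'' are $\ad_u r$, but for $u\in S$ and $r\in\wedge^2 S$ one has $\ad_u r\in e_0\wedge S$, not $\wedge^2 S$ (since $[u,v]=\omega(u,v)e_0$ for $u,v\in S$). The $\wedge^2 S$-content of $\Delta$ actually sits in $\Delta(e_{-1})$, where $\ad_{e_{-1}}r\in\wedge^2 S$ because $D$ preserves $S$. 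So the bookkeeping of which piece of $\Delta$ lands in which summand of $\wedge^2\G_\la$ needs to be redone carefully before the coboundary subtraction makes sense. Likewise, your claim that ``subtracting this coboundary leaves a residual cocycle $\Delta_1$ with values in $(e_{-1}\wedge\G_\la)\oplus(e_0\wedge\G_\la)$'' is not obviously right and would need the genericity hypothesis to rule out residual $\wedge^2 S$-terms in $\Delta_1(e_{-1})$; this is where the condition $\la_k\ne\la_i+\la_j$ actually enters, not only (as you suggest) in the off-diagonal inversion of $\ad_{e_{-1}}$.
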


\begin{rem}\label{rem2} Note that \eqref{bou} involves only data on the vector space $S$ and if $\dim\G_\la=4$ then $r=a e_i\wedge\ch_i$ and  $\ad_{e_{-1}}r=0$ hence the equation \eqref{bou} is satisfied.
	
	\end{rem}

We end this section by giving the quadratic dimension of the oscillator Lie algebras. This notion was introduced by Benayadi in \cite{ben, bajo} and turn out to provide much relevant information on the structure of the Lie algebra.
Let $\G$ be a Lie algebra and let $B(\G)$ be the vector space spanned by all non-degenerate invariant symmetric bilinear forms on $\G$. Let us denote $dq (\G) := \dim B(\G)$; we say that $dq (\G)$ is the {\it quadratic dimension} of $\G$. The following lemma gives the quadratic dimension of the oscillator Lie algebras and will play a crucial role in the proof of Theorem \ref{poisson}.

\begin{Le}\label{le1} The quadratic dimension of $\G_\la$ is equal to 2. More precisely,
	let  $B:\G_\la\times \G_\la\too\R$ be a symmetric bilinear invariant form, i.e.,
	\begin{equation}\label{lep}
	B([u,v],w)+B(v,[u,w])=0,\quad \mbox{ for } u,v,w\in\G_\la.
	\end{equation}Then there exist $p,q\in\R$ such that
	\[ B=p \textbf{k}_\la+qe_{-1}^*\odot e_{-1}^*, \]where $\textbf{k}_\la$ is the invariant Lorentzian metric given by \eqref{lorentz}.
	
\end{Le}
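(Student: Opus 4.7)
My plan is to fix an arbitrary invariant symmetric bilinear form $B$ on $\G_\la$, evaluate it on all pairs of basis vectors of $\B$, and show that every entry is determined by the two scalars $p := B(e_0, e_{-1})$ and $q := \tfrac12 B(e_{-1}, e_{-1})$.

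The computation is driven by substituting the three families of brackets in \eqref{bracket} into the invariance relation \eqref{lep}. Using $[e_i, \ch_i] = e_0$ with $w = e_{-1}$ yields $B(\ch_i, \ch_i) = B(e_i, e_i) = p/\la_i$, and the same identity with $w \in \{e_0, e_j, \ch_j\}$ forces $B(e_0, e_0) = 0$ and $B(e_0, e_i) = B(e_0, \ch_i) = 0$. Next I would take $u = w = e_{-1}$: since $[e_{-1}, e_{-1}] = 0$, the second term of \eqref{lep} disappears, so $B(\ad_{e_{-1}}(v), e_{-1}) = 0$ for every $v$, and because $\ad_{e_{-1}}$ restricts to a bijection of $S$ this forces $B(e_i, e_{-1}) = B(\ch_i, e_{-1}) = 0$.

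What remains is the off-diagonal symplectic block. For each pair $i \neq j$ I would apply \eqref{lep} with $u = e_{-1}$ and $(v, w) \in \{e_i, \ch_i\} \times \{e_j, \ch_j\}$, producing four invariance relations in the four unknowns $B(e_i, e_j)$, $B(\ch_i, \ch_j)$, $B(e_i, \ch_j)$, $B(\ch_i, e_j)$. Two of them immediately combine into $(\la_i^2 - \la_j^2)\, B(e_i, e_j) = 0$ and analogous equalities for the other entries, so whenever $\la_i \neq \la_j$ the entire off-diagonal block collapses. Specialising to $j = i$ in the same family also gives $B(e_i, \ch_i) = 0$. Comparing the resulting table of values with the coordinate expressions of $\textbf{k}_\la$ in \eqref{lorentz} and of $e_{-1}^* \odot e_{-1}^*$ yields $B = p\, \textbf{k}_\la + q\, e_{-1}^* \odot e_{-1}^*$, and these two forms are manifestly linearly independent, so $dq(\G_\la) = 2$.

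The step I expect to be the main obstacle is exactly the off-diagonal vanishing: it rests on the $4 \times 4$ coefficient matrix being nonsingular, whose determinant is proportional to a power of $\la_i^2 - \la_j^2$. This forces the argument to require the $\la_i$'s to be pairwise distinct, slightly stronger than the standing $0 < \la_1 \le \cdots \le \la_n$: if two eigenvalues coincide, forms such as $e_i^* \odot e_j^* + \ch_i^* \odot \ch_j^*$ are also invariant and disturb the dimension count. So either the lemma tacitly invokes the genericity hypothesis used in Theorem \ref{bm}, or a more careful bookkeeping on each eigenspace of $\ad_{e_{-1}}^2|_S$ is needed to recover the conclusion.
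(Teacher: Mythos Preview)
Your computation is correct as far as it goes, but the concern you raise at the end is misplaced, and this is precisely where the paper's argument diverges from yours. The paper does not compute entry by entry; it introduces the $\textbf{k}_\la$-symmetric endomorphism $L$ with $B(u,v)=\textbf{k}_\la(Lu,v)$, so that invariance becomes $L[u,v]=[u,Lv]$. Taking $u=e_{-1}$ shows $L$ commutes with $\ad_{e_{-1}}$, hence preserves $S=\mathrm{Im}\,\ad_{e_{-1}}$; then for $u,v\in S$ the relation $[u,v]=\om(u,v)e_0$ yields $\om(u,Lv)=p\,\om(u,v)$, and nondegeneracy of $\om$ on $S$ forces $L|_S=p\,\mathrm{Id}_S$. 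No separation of the $\la_i$ is used anywhere.

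Your suspected counterexample $B'=e_i^*\odot e_j^*+\ch_i^*\odot\ch_j^*$ for $\la_i=\la_j$ is in fact \emph{not} invariant: with $u=\ch_j$, $v=e_i$, $w=e_{-1}$ in \eqref{lep} one has $[\ch_j,e_i]=0$ and $[\ch_j,e_{-1}]=\la_j e_j$, so the left side equals $\la_j B'(e_i,e_j)=\la_j\neq0$. The point is that you only tested the off-diagonal block with $u=e_{-1}$, which indeed gives a degenerate $4\times4$ system when $\la_i=\la_j$; but the invariance relations with $u\in S$ and $w=e_{-1}$ are the ones that close the argument. Concretely, for $u,v\in S$ and $w=e_{-1}$, \eqref{lep} reads $\om(u,v)B(e_0,e_{-1})-B(v,Du)=0$, i.e.\ $B(v,Du)=p\,\om(u,v)$, and since $D$ is bijective on $S$ this single identity determines the whole $S\times S$ block of $B$ as $p$ times that of $\textbf{k}_\la$, without any appeal to $\la_i\neq\la_j$. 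So the lemma holds under the standing hypothesis $0<\la_1\le\cdots\le\la_n$; the paper's endomorphism argument reaches the same conclusion more cleanly, while your route works once you bring in these extra relations rather than relying solely on $u=e_{-1}$.
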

\begin{proof} Since $\textbf{k}_\la$ is non-degenerate there exists a $\textbf{k}_\la$-symmetric endomorphism $L$ of $\G_\al$ such that for any $u,v\in\G_\la$, $B(u,v)=\textbf{k}_\la(Lu,v)$ and $B$ is bi-invariant if and only if
	\[ L[u,v]=[u,Lv];\quad u,v\in\G. \]
	This condition implies that the center of $\G_\la$ is invariant by $L$ and hence there exits $p\in\R$ such that $L(e_0)=p e_0$. Moreover, $L\circ\ad_{e_{-1}}=\ad_{e_{-1}}\circ L$ and hence $L$ leaves $S$ invariant. So we get for any $u,v\in S$,
	\[ \om(u,v)pe_0=\om(u,Lv)e_0. \]But the restriction of $\om$ to $S$ is non-degenerate and hence, for any $u\in S$, $Lu=pu$. On the other hand, $L$ is symmetric and hence leaves invariant $S^\perp=\mathrm{span}\{e_0,e_{-1} \}$ so there exits $q'$ such that $L(e_{-1})=q'e_0+pe_{-1}$. By taking $q'=1/2 q$ we get the result.
\end{proof}

\section{Poisson  and symmetric Leibniz structures on oscillator algebras}\label{section3}

In this section we will determine  all Poisson structures, and in particular, all   Leibniz symmetric algebra structures whose underlying Lie algebra are generic  oscillator Lie algebras.

\begin{theo}\label{poisson}Let $(\G_\la,[\;,\;])$ be an oscillator Lie algebra with $0<\la_1<\ldots<\la_n$ and  $\circ:\G_\la\otimes\G_\la\too\G_\la$ a product such that $(\G_\la,\br,\circ)$ is a Poisson algebra. Then there exists $c\in\R$ such
	$$ e_{-1}\circ e_{-1}=ce_0$$ and all the other  products are zero.
Moreover,  $(\G_\la,.)$ is a symmetric Leibniz algebra where
\[ u.v=[u,v]+u\circ v,\quad u,v\in\G_\la \]and $\textbf{k}_\la$ is invariant with respect to the product $'.'$, i.e., for any $u,v,w\in\G_\la$,
\[ \textbf{k}_\la(u.v,w)=\textbf{k}_\la(u,v.w). \]

\end{theo}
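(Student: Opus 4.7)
The strategy is to combine Lemma \ref{le1} --- which will pin down the $e_{-1}$-component of every product $u\circ v$ --- with the Poisson identity $[u,v\circ w]=[u,v]\circ w+v\circ[u,w]$ and associativity to force the rest of $\circ$. To exploit Lemma \ref{le1}, I first observe that the bilinear form $B(u,v):=e_{-1}^*(u\circ v)$ is symmetric (by commutativity of $\circ$) and bi-invariant: the Poisson identity gives $B([u,v],w)+B(v,[u,w])=e_{-1}^*([u,v\circ w])$, which vanishes since $e_{-1}^*$ annihilates $[\G_\la,\G_\la]=\R e_0\oplus S$. Hence $B=p\,\textbf{k}_\la+q\,e_{-1}^*\odot e_{-1}^*$ for some $p,q\in\R$.

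Next, since $\ad_{e_{-1}}$ is a derivation of $\circ$, both $e_{-1}\circ e_{-1}$ and $e_{-1}\circ e_0$ lie in $\ker\ad_{e_{-1}}=\R e_{-1}\oplus\R e_0$; write $e_{-1}\circ e_{-1}=a\,e_{-1}+c\,e_0$ and $e_{-1}\circ e_0=b_{-1}\,e_{-1}+b_0\,e_0$ (so that $a=2q$ and $b_{-1}=p$ by the above). Iterating the Poisson identity with $u\in\{e_j,\ch_j\}$ and one of $v,w$ in $\{e_{-1},e_0\}$ produces $e_{-1}\circ e_j=\tfrac{a}{2}e_j$, $e_{-1}\circ\ch_j=\tfrac{a}{2}\ch_j$, $e_0\circ e_j=b_{-1}e_j$, $e_0\circ\ch_j=b_{-1}\ch_j$, $e_0\circ e_0=b_{-1}e_0$; and a further iteration yields $e_i\circ e_j=\ch_i\circ\ch_j=e_i\circ\ch_j=0$ for $i\neq j$, together with
\[ e_i\circ e_i=\ch_i\circ\ch_i=\la_i^{-1}\big(b_{-1}\,e_{-1}+(b_0-\tfrac{a}{2})\,e_0\big). \]

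The main obstacle is then to force $a=b_{-1}=b_0=0$. The decisive identity is Poisson applied with $u=\ch_i$, $v=w=e_i$. On one side, $[\ch_i,e_i\circ e_i]=2[\ch_i,e_i]\circ e_i=-2(e_0\circ e_i)=-2b_{-1}e_i$; on the other, using the formula above together with $[\ch_i,e_{-1}]=\la_i e_i$ and $[\ch_i,e_0]=0$, one gets $[\ch_i,e_i\circ e_i]=b_{-1}e_i$. Hence $3b_{-1}=0$, so $b_{-1}=0$. Associativity $(e_{-1}\circ e_{-1})\circ e_i=e_{-1}\circ(e_{-1}\circ e_i)$ then collapses to $\tfrac{a^2}{2}e_i=\tfrac{a^2}{4}e_i$, forcing $a=0$; and $(e_{-1}\circ e_{-1})\circ e_0=e_{-1}\circ(e_{-1}\circ e_0)$ gives $b_0^2=0$. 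We conclude $u\circ v=c\,e_{-1}^*(u)\,e_{-1}^*(v)\,e_0$.

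Finally, the three verifications are routine. (i) $\circ$ is Poisson: associativity and commutativity are immediate since all products land in $\R e_0$ and $e_0\circ\cdot=0$, and both sides of the Poisson identity vanish because $[\cdot,ce_0]=0$ while $[u,v]$ and $[u,w]$ have no $e_{-1}$-component. (ii) $u.v:=[u,v]+u\circ v$ is symmetric Leibniz: the two derivation identities reduce via Jacobi to a short case analysis on whether the entries equal $e_{-1}$, using that $u\circ v$ vanishes unless $u=v=e_{-1}$ and that $ce_0$ is central. (iii) Invariance of $\textbf{k}_\la$ for $\cdot$: by bi-invariance of $\textbf{k}_\la$ with respect to $[\;,\;]$, the claim reduces to $\textbf{k}_\la(u\circ v,w)=\textbf{k}_\la(u,v\circ w)$, and both sides equal $c\,e_{-1}^*(u)\,e_{-1}^*(v)\,e_{-1}^*(w)$.
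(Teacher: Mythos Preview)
Your proof is correct and follows the same overall strategy as the paper's: invoke Lemma~\ref{le1} to constrain the $e_{-1}$-component of $\circ$, iterate the Poisson identity \eqref{eq0} to determine the remaining products, and finish with associativity. The execution, however, is organized differently and is somewhat more economical. Where the paper introduces auxiliary endomorphisms $A,L:S\to S$ (via $a_0(u,v)=\omega(Au,v)$ and $L(u)=a_S(e_{-1},u)$) and argues with them abstractly before invoking associativity, you compute all basis products directly in terms of three scalars $a,b_{-1},b_0$ and then kill these one by one. In particular, your ``decisive identity'' --- Poisson with $u=\ch_i$, $v=w=e_i$, yielding $3b_{-1}e_i=0$ --- is a clean shortcut to $p=b_{-1}=0$, whereas the paper reaches $p=0$ by first showing $a_0(e_0,e_0)=0$ via a separate substitution and then identifying $p$ with it. The associativity step at the end (forcing $a=0$ and $b_0=0$) is essentially the same in both arguments. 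One small omission in your list: you should also record $e_i\circ\ch_i=0$ for each $i$ (the same-index case), which follows at once from Poisson with $u=e_i$, $v=e_{-1}$, $w=e_i$, using $e_{-1}\circ e_i=\tfrac{a}{2}e_i$.
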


\begin{proof}  Consider a Poisson structure on the oscillator Lie algebra $(\G_\la,[\;,\;])$ given by an associative and commutative product $\circ$ satisfying
	\begin{equation}
	\label{eq3}[u,v\circ w]=[u,v]\circ w+v\circ [u,w].
	\end{equation}
For $ u,v \in \G_\la$, put
\[ u\circ v=a_{-1}(u,v)e_{-1}+a_{0}(u,v)e_{0}+a_S(u,v),\]
with $a_{-1} $ and $a_0 $ two symmetric bilinear  forms and $a_S:\G_\la\times \G_\la\too S$ a bilinear symmetric  map.

It is obvious from \eqref{eq3} that the center is stable by the product and hence  $e_0\bullet e_0=ce_0$, with $c:=a_{0}(e_0,e_0 )$.

Now for $ u,v,w \in \G_\la$, \eqref{eq3} can be written
\begin{equation*}
\begin{split}
a_{-1}(v,w)[u,e_{-1}]+[u,a_S(v,w)]  = &   \Bigl( a_{-1}([u,v],w)+a_{-1}(v,[u,w])\Bigr) e_{-1} \\
 & \hspace{-4,5cm}+ \Bigl( a_{0}([u,v],w)+a_{0}(v,[u,w])\Bigr) e_{0}   +a_S([u,v],w)+a_S(v,[u,w]).
\end{split}
\end{equation*}Since $e_{-1} \notin [\G_\la , \G_\la ]$, this is equivalent to
\begin{equation}\label{p1}
\left\{  \begin{array}{l}a_{-1}([u,v],w)+a_{-1}(v,[u,w])=0\\
	a_{-1}(v,w)[u,e_{-1}]+[u,a_S(v,w)]  =  \Bigl( a_{0}([u,v],w)+a_{0}(v,[u,w])\Bigr) e_{0} \\
	 +a_S([u,v],w)+a_S(v,[u,w]).\end{array}
	 \right.
\end{equation}
By Lemma \ref{le1}, it follows that
\begin{equation}\label{-1}a_{-1}=p \textbf{k}_\la+qe_{-1}^*\odot e_{-1}^*.\end{equation}
If we take $v=e_0$ and $u,w \in S$ in  \eqref{p1}, we get
\begin{equation*}
\begin{split}
a_{-1}(e_0,w)[u,e_{-1}]+[u,a_S(e_0,w)]  = &  a_{0}(e_0,[u,w] ) e_{0} +a_S(e_0,[u,w]).
\end{split}
\end{equation*}We have $[u,a_S(e_0,w)] =\om(u,a_S(e_0,w)) e_{0} $ and, by virtue of \eqref{-1}, $a_{-1}(e_0,w)=0$ so
\[ \om(u,a_S(e_0,w))=c\om(u,w)\esp \om(u,w)a_S(e_0,e_0)=0. \]
Since $\om$ is nondegenerate in restriction to $S$ then  $a_S(e_0,w)=cw$ for all $w\in S$ and $a_S(e_0,e_0)=0$.

Using  \eqref{p1} with  $v=e_0$, $u=e_{-1}$ and $w\in S$ we get
\begin{equation*}
\begin{split}
[e_{-1},cw]  = &  a_{0}(e_0,[e_{-1},w])  e_{0} +a_S(e_0,[e_{-1},w]),
\end{split}
\end{equation*}
so
\[ c[e_{-1},w]=a_{S}(e_0,[e_{-1},w])\esp a_{0}(e_0,[e_{-1},w])=0. \]
Since the restriction of $\ad_{e_{-1}}$ to $S$ is bijective then by the last condition we conclude $a_0(e_0,S)=0$.

If we take  $v=e_0$, $w=e_{-1}$ and  $u\in S$ in  \eqref{p1}, we obtain
\begin{equation*}
\begin{split}
a_{-1}(e_0,e_{-1})[u,e_{-1}]+[u,a_S(e_0,e_{-1})]  = & a_{0}(e_0,[u,e_{-1}]) e_{0} +a_S(e_0,[u,e_{-1}])
\end{split}
\end{equation*}
Since $a_0(e_0,S)=0$, this equation implies that $[u,a_S(e_0,e_{-1})]=\om(u,a_S(e_0,e_{-1}))=0$ and $a_{-1}(e_0,e_{-1})=p=c$ and then
$a_S(e_0,e_{-1})=0$.

If we take $u,v,w\in S$ in  \eqref{p1} and use the facts that  $a_S(e_0,w)=cw$ and $a_0(e_0,S)=0$, we get
\begin{eqnarray*}
 a_{-1}(v,w)[u,e_{-1}] = c\Bigl(\om(u,v)w+\om(u,w)v\Bigr), \esp
  \om(u,a_S(v,w))  =    0  .\nonumber
\end{eqnarray*}
This implies that $a_S(v,w)=0$ and by taking
 $v=e_i$ and $w=\ch_i$ and by using the fact that $a_{-1}(e_i,\ch_i)=0$, we get
\[ c\om(u,e_i)=c\om(u,\ch_i)=0  \quad \mbox{ for } u\in S,\] and hence $c=0$. This implies that $a_{-1}=qe_{-1}^*\odot e_{-1}^*$.

If we take $u=e_{-1}$ in  \eqref{p1} we get
\begin{eqnarray*}
	&& [e_{-1},a_S(v,w)]=a_S([e_{-1},v],w)+a_S(v,[e_{-1},w]), \\
	&& a_0([e_{-1},v],w)+a_0(v,[e_{-1},w])=0.
\end{eqnarray*}
If we take $v=w=e_{-1}$ in the first equation we get $ [e_{-1},a_S(e_{-1},e_{-1})]=0$, since $ a_S(e_{-1},e_{-1})  \in S$ then $ a_S(e_{-1},e_{-1}) =0$.
If we take $v=e_{-1}$ in the second equation we get $a_0(e_{-1},S)=0$.

 We have shown so far that, for all $u,v \in S$
$$\begin{array}{l}
\left\{
\begin{array}{l}
 e_0\circ e_0 = e_0\circ u=0,\\
u\circ v=a_0(u,v)e_0,\\
  e_0\circ e_{-1}=a_0(e_0,e_{-1})e_0,\\
  e_{-1}\circ u=a_S( e_{-1},u),\\
  e_{-1}\circ e_{-1}=qe_{-1}+a_0(e_{-1},e_{-1})e_0.
\end{array}
\right.
\end{array}$$

We define the endomorphisms $A,L:S\too S$ by the relation: for  $u,v\in S$,
\[ a_0(u,v)=\om(A(u),v)=\om(A(v),u)\esp L(u):= e_{-1}\bullet u=a_S(e_{-1},u). \]
It is clear that $A$ the skew-symmetric with respect to  $\om$.

If we take $v=e_{-1}$ and $u\in S$ in  \eqref{p1} then
\begin{eqnarray}
&& a_{-1}(e_{-1},w)[u,e_{-1}]=a_S([u,e_{-1}],w)+a_S(e_{-1},[u,w]), \label{eq:33a}\\
&&  \om(u,a_S(e_{-1},w))=a_{0}([u,e_{-1}],w)+a_{0}(e_{-1},[u,w]). \label{eq:33b}
\end{eqnarray}
If we take $w\in S$,  then  the last equation can be written
\[ \om(u,L(w))=-\om(AD(u),w)+\om(u,w)a_0(e_{-1},e_0), \]where $D$ is the restriction of $\ad_{e_{-1}}$ to $S$.
Since $\om$ is nondegenerate in restriction to $S$,  $A$ and $D$ are skew-symmetric with respect to $\om$,  then
\[ L=-DA+a_0(e_{-1},e_0)\mathrm{Id}_S. \]
If we take $w=e_{-1}$ and $u\in S$ in the equation \eqref{eq:33a}, we get
\[ LD=\frac12a_{-1}(e_{-1},e_{-1})D. \]
As $D$ is invertible then  $L=\frac12a_{-1}(e_{-1},e_{-1})\mathrm{Id}_S$ and
\begin{equation}\label{eq:DA}
DA= \Bigl(a_0(e_{-1},e_0)-\frac12a_{-1}(e_{-1},e_{-1}) \Bigr)\mathrm{Id}_S.
\end{equation}
Let us summarize what we have shown until now: for $u,v\in S,$
$$\begin{array}{l}
\left\{
\begin{array}{l}
 e_0\circ e_0 = e_0\circ u=0,\\
u\circ v=\om(A(u),v)e_0,\\
  e_0\circ e_{-1}=a_0(e_0,e_{-1})e_0,\\
  e_{-1}\circ e_{-1}=a_{-1}(e_{-1},e_{-1})e_{-1}+a_0(e_{-1},e_{-1})e_0,\\
  e_{-1}\circ u=\frac12a_{-1}(e_{-1},e_{-1})u .
\end{array}
\right.
\end{array}$$

We need now to use  the associativity of the product $\circ$ in order to finish the proof. We have, for any $u,v,w\in\G_\la$
\[ \mathrm{ass}(u,v,w):=(u\circ v)\circ w-u\circ(v\circ w)=0. \]
So, for any $u,v\in S$,
\begin{eqnarray*}
0&=&\mathrm{ass}(e_{-1},e_{-1},e_0)  =  a_{-1}(e_{-1},e_{-1})e_{-1}\circ e_0-a_0(e_0,e_{-1})e_{-1}\circ e_0 \\
 & =&\Bigl(a_{-1}(e_{-1},e_{-1})-a_0(e_0,e_{-1})\Bigr)a_0(e_0,e_{-1})e_0,\\
0&=&\mathrm{ass}(e_{-1},u,v)  =  \frac12a_{-1}(e_{-1},e_{-1})u\circ v-\om(A(u),v)e_{-1}\circ e_0 \\
 & =&\Bigl(\frac12a_{-1}(e_{-1},e_{-1}) - a_0(e_0,e_{-1})\Bigr)\om(A(u),v) e_0.
\end{eqnarray*}So, for any $u\in S$,
\[\Bigl(a_{-1}(e_{-1},e_{-1})-a_0(e_0,e_{-1})\Bigr)a_0(e_0,e_{-1})=0\esp \Bigl(\frac12a_{-1}(e_{-1},e_{-1}) - a_0(e_0,e_{-1})\Bigr)Au=0. \]
 If there exists $u\in S$ such that $Au\neq 0$ then $ \frac12a_{-1}(e_{-1},e_{-1})-a_0(e_0,e_{-1})=0$. By the equation \eqref{eq:DA} we have $DA=0$ and as $D$ is invertible we conclude $A=0$, a contradiction. We have $A=0$ then $DA=0$ and consequently by equation \eqref{eq:DA}, we get   $a_{-1}(e_{-1},e_{-1})=2a_0(e_0,e_{-1})$. Now, using also $\mathrm{ass}(e_{-1},e_{-1},e_0)=0$, we have $a_{-1}(e_{-1},e_{-1})=2a_0(e_0,e_{-1})$, $A=0$ then $a_0(e_0,e_{-1})^2=0$, therefore $a_0(e_0,e_{-1})=0$, $a_{-1}(e_{-1},e_{-1})=0$ and $A=0$.
To summarize, we have shown that the only non vanishing product is
\[ e_{-1}\circ e_{-1}=a_0(e_{-1},e_{-1})e_0.\]
This defines clearly a Poisson structure on $\G_\la$
and it is the only one.

Let us show now that $\G_\la$ endowed with the product $u.v=[u,v]+u\circ v$ is a symmetric Leibniz algebra and $\textbf{k}_\la(u.v,w)=\textbf{k}_\la(u,v.w)$. The last assertion follows from the bi-invariance of $\textbf{k}_\la$ and one can check easily that the conditions of Proposition 2.11 in \cite{be} are trivially satisfied.
\end{proof}

\section{ Symmetric Leibniz bialgebra structures on oscillator Lie algebras}
\label{section4}

In this section, we will determine the symmetric Leibniz bialgebra structures on the generic  oscillator Lie algebras.
We apply Theorem 1.1 in \cite{bm} to provide the oscillator Lie algebra $\G_\la$ with Lie bialgebra structures and then the Theorem 3.8 in \cite{be} to endow $\G_\la$ with symmetric Leibniz bialgebra structures.

A main goal of our work is to present  symmetric Leibniz bialgebra structures on the class of generic  oscillator Lie algebras.
\begin{theo}\label{bel} Let $(\G_\la,[\;,\;])$ be a generic oscillator Lie algebra, $"."$ the   symmetric Leibniz product on $\G_\la$ given in Theorem \ref{poisson} and $\De:\G_\la\too \G_\la\otimes\G_\la$ a coproduct. Then $(\G_\la,.,\De)$ is   a symmetric Leibniz bialgebra  if and only if there exists a nonzero $\ga\in\R$, $u_0\in S$, $r\in\wedge^2S$ and $\mu\in\R^n$
\begin{eqnarray*}
&& \De(e_0)=0, \\
&& \De(e_{-1})=\ga e_0\odot e_0+ \ad_{e_{-1}} r-2e_0\wedge D(u_0),\\
&& \De(u)=\ad_u r+2e_0\wedge J^\mu(u),  \quad \mbox{ for } u\in S
\end{eqnarray*}
  and $r$ satisfies
	\begin{equation}\label{bouc}\om_{r,\ad_{e_{-1}} r}-(J^\mu\circ \ad_{e_{-1}})  r=0.\end{equation}
\end{theo}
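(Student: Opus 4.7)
The plan is to apply Theorem~\ref{be} to the symmetric Leibniz algebra $(\G_\la,.)$ from Theorem~\ref{poisson} and decompose the coproduct as $\De=\De_{\mathfrak L}+\De_a$ into its antisymmetric and symmetric parts. Condition 1 of Theorem~\ref{be} asks that $(\G_\la,[\;,\;],\De_{\mathfrak L})$ be a Lie bialgebra, which by Theorem~\ref{bm} is equivalent to the existence of $r\in\wedge^2 S$, $u_0\in S$, $\mu\in\R^n$ satisfying \eqref{bouc} with $\De_{\mathfrak L}(u)=\ad_u r+2e_0\wedge((J^\mu+\ad_{u_0})(u))$. Using $\ad_{u_0}(u)=\om(u_0,u)e_0$ together with $e_0\wedge e_0=0$ for $u\in S$, and $J^\mu(e_{-1})=J^\mu(e_0)=0$, this specializes to $\De_{\mathfrak L}(e_0)=0$, $\De_{\mathfrak L}(u)=\ad_u r+2e_0\wedge J^\mu(u)$ for $u\in S$, and $\De_{\mathfrak L}(e_{-1})=\ad_{e_{-1}}r-2e_0\wedge D(u_0)$, which matches exactly the antisymmetric contributions in the statement.

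Next I pin down the support of $\De_a$ via condition 3. Since $[\G_\la,\G_\la]=\R e_0\oplus S$ (because $[e_{-1},S]=D(S)=S$ and $[S,S]=\R e_0$), the requirement $\De_a([x,y])=0$ forces $\De_a$ to vanish on $\R e_0\oplus S$. By Theorem~\ref{poisson} the only nonzero $\bullet$-product is $e_{-1}\bullet e_{-1}=ce_0\in\R e_0$, so $\De_a(x\bullet y)=0$ and $\De_{\mathfrak L}(x\bullet y)=0$ are automatic. Therefore $\De_a$ is entirely determined by the single symmetric tensor $\De_a(e_{-1})\in\G_\la\odot\G_\la$.

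To determine $\De_a(e_{-1})$, I use condition 5 with $x=e_{-1}$ and $y=u\in S$. The term $e_{-1}\cdot_a\De_{\mathfrak L}(u)$ vanishes because $\De_{\mathfrak L}(u)\in(\R e_0\oplus S)\wedge(\R e_0\oplus S)$ has no $e_{-1}$ in its first tensor slot and $e_{-1}\bullet v=0$ for every $v\in\R e_0\oplus S$. The remaining requirement $\De_a(e_{-1})\cdot_{\mathfrak L}u=0$ for all $u\in S$ is now analyzed in the basis of symmetric products using $[e_{-1},u]=D(u)\in S$, $[e_0,u]=0$, and $[v,u]=\om(v,u)e_0$ for $v\in S$. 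Contributions containing $e_{-1}$ in the first slot of the output tensor arise only from $e_{-1}\odot\cdot$ components of $\De_a(e_{-1})$, so bijectivity of $D$ forces every such coefficient to vanish. The remaining contributions all yield tensors of the form $X\otimes e_0$; tracking them by basis element and invoking the nondegeneracy of $\om$ on $S$ forces every coefficient to vanish except that of $e_0\odot e_0$, for which $(e_0\odot e_0)\cdot_{\mathfrak L}u=0$ trivially. This yields $\De_a(e_{-1})=\ga\,e_0\odot e_0$ for some $\ga\in\R$.

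For sufficiency, I verify the remaining conditions of Theorem~\ref{be}. Since $e_0$ is central and $z\bullet e_0=0$ for every $z\in\G_\la$, every occurrence of $\De_a(e_{-1})=2\ga\,e_0\otimes e_0$ inside a $\cdot_{\mathfrak L}$- or $\cdot_a$-action produces zero; and because $\De_{\mathfrak L}$ lands in $(\R e_0\oplus S)\wedge(\R e_0\oplus S)$, the $\cdot_a$-action on its second slot by $e_{-1}$ together with the compositions $(\De_a\otimes I)\circ\De_{\mathfrak L}$ and $(\De_{\mathfrak L}\otimes I)\circ\De_a$ all vanish trivially, so conditions 2, 4 and 6 and the untreated cases of condition 5 are satisfied automatically. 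The main obstacle is the case analysis in the preceding paragraph: systematically identifying the irreducible tensor types that can arise in $\De_a(e_{-1})\cdot_{\mathfrak L}u$ and arguing, via injectivity of $D$ and nondegeneracy of $\om|_S$, that only the $e_0\odot e_0$ coefficient can survive.
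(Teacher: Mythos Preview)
Your argument is correct and follows the same overall architecture as the paper: decompose $\De=\De_{\mathfrak L}+\De_a$ via Theorem~\ref{be}, invoke Theorem~\ref{bm} for the antisymmetric part, use condition~3 to see that $\De_a$ is supported on $e_{-1}$, and then determine $\De_a(e_{-1})$.

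The one substantive difference lies in how $\De_a(e_{-1})$ is constrained. The paper first appeals to an external fact (Proposition~2.11 of \cite{bb}) about the annihilator of the dual associative product to kill the $e_{-1}$--containing components of $\De_a(e_{-1})$, and only afterwards computes $\De_a(e_{-1})\cdot_{\mathfrak L}u$ to eliminate $v_0$ and $s$. You instead extract \emph{all} constraints---on $\al,\be,v_{-1},v_0,s$---directly from condition~5 with $x=e_{-1}$ and $y=u\in S$, by sorting the components of $\De_a(e_{-1})\cdot_{\mathfrak L}u$ according to tensor type and using bijectivity of $D$ together with nondegeneracy of $\om|_S$. This is more self-contained (no outside citation needed) and arguably cleaner; the paper's route, on the other hand, gives a conceptual reason why the $e_{-1}$ components must vanish. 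Your sufficiency check is also more explicit than the paper's, which leaves several of the verifications implicit.
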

\begin{proof} According to Theorem \ref{be},  $\De=\De_{\mathfrak{L}}+\De_a$ with $(\G_\la,\br,\De_{\mathfrak{L}})$ is a Lie bialgebra and $\De_a$ satisfies the conditions 2-5 of Theorem \ref{be}. According to Theorem \ref{bm}, for any $u\in\G_\la$,
\[ \De_{\mathfrak{L}}(u)=\ad_u r+2e_0\wedge ((J^\mu+\ad_{u_0})(u)), \]
$\mu\in\R^n$, $u_0\in S$ and  $r\in\wedge^2S$   satisfying \eqref{bou}.
Thus
\begin{eqnarray*}
&& \De_{\mathfrak{L}}(e_0)=0, \\
&& \De_{\mathfrak{L}}(e_{-1})=\ad_{e_{-1}} r-2e_0\wedge Du_0,\\
&& \De_{\mathfrak{L}}(u)=\ad_u r+2e_0\wedge J^\mu(u), \quad \mbox{ for } u\in S.
\end{eqnarray*}

On the other hand,
 Condition 3. in Theorem \ref{be} implies that
\[ \De_a(e_0)=\De_a(u)=0,\quad u\in S. \] To check the other conditions in Theorem \ref{be} we write
\[ \De_a(e_{-1})=\al e_{-1}\odot e_{-1}+\be e_{-1}\odot e_{0}+\ga e_{0}\odot e_{0}+ e_{-1}\odot v_{-1}+ e_{0}\odot v_{0}+s, \]
with $\al , \be , \ga \in \R$, $v_0,v_{-1}\in S$ and $s\in \bigodot^2S$.

Note first that if $\circ=\De_a^*$ the dual associative commutative product on $\G_\la$ then, for any $\al,\be\in\G_\la^*$,
\[ \al\circ\be= \De_a(e_{-1})(\al,\be) e_{-1}^* \]and $(\G_\la^*,\De_{\mathcal L},\circ)$ is a symmetric Leibniz algebra.  According to \cite[Proposition 2.11]{bb}, $\al\circ\be$ is in the annulator of $\circ$ and hence $e_{-1}^*$ is in the annulator. This implies
\[ \De_a(e_{-1})=\ga e_{0}\odot e_{0}+ e_{0}\odot v_{0}+s.  \]
Since for the  commutative associative product on $\G_\la$ the only nonzero product is $e_{-1}\bullet e_{-1}=ce_0$, the conditions 2-6 in Theorem \ref{be} are equivalent to
\[ (\De_{\frak L} \otimes I_{\frak L}) \circ \De_a = 0\esp \De_a(e_{-1})\cdot_{\frak L}x=x\cdot_{\frak L}\De_a(e_{-1})=0, \] for any $x\in\G_\la$.

Let us write $ s = \sum_{i=1}^m p_i \otimes q_i $,  with $p_i ,q_i \in S$ when $   \: { i \in \{1,\ldots ,m\}}$, such that $\{p_1,\ldots ,p_m\}$ is a set of linear independent vectors in $S$. Since the center of $\G_\la$ is $ \R e_0$ then we have, for $u\in S$,
\begin{eqnarray*}
\begin{split}
\De_a(e_{-1})._{\mathfrak{L}}u
& =  \om(v_0,u)e_0\otimes e_0+\sum_{i=1}^m p_i \otimes [q_i,u]\\
&=\om(v_0,u)e_0\otimes e_0+\left( \sum_{i=1}^m\om(q_i,u)p_i\right)\otimes e_0.
\end{split}
\end{eqnarray*}The vanishing of this bilinear form is equivalent to
\[ \om(v_0,u)=\om(q_i,u)=0,\quad i=1,\ldots,m \]for any $u\in S$. But $\om$ is nondegenerate and hence $v_0=0$ and $s=0$. This completes the proof.
\end{proof}

\section{Geometric consequences}\label{section5}

Let $G$ be a connected Lie group and $(\G,\br)$ its Lie algebra. It has been shown in \cite{bb} that there is  a bijection
between the set of Poisson structures on   $(\G,\br)$ and the space of
bi-invariant torsion free linear connections  on  $G$ which have the same curvature
as $\na^0$ where $\na^0$ is the bi-invariant connection on $G$ given by $\na^0_XY=\frac12[X,Y]$. Moreover, if the Poisson structure is symmetric Leibniz the corresponding bi-invariant connection is locally symmetric and has the same holonomy algebra as $\na^0$. In this section, we give the expression in the canonical coordinates of the bi-invariant connection on the oscillator Lie group associated to the Poisson structure defined in Theorem \ref{poisson}. Actually, we give also the expression of the left invariant Lorentzian metric associated to $\textbf{k}_\la$ and its Levi-Civita connection $\na^0$.

For $n \in \N^*$ and $\la = (\la_1,\ldots,\la_n) \in \R^n$ with $0 < \la_1 \leq \cdots \leq \la_n$, the $\la$-oscillator group, denoted by $G_\la$, is the Lie group with the underlying manifold $\R^{2n+2} = \R \times \R\times \C^n$ and product $$(t,s,z).(t',s',z') = \Bigl( t+t',s+s'+\frac12 \sum_{j=1}^n \mbox{Im }[\bar{z}_j \exp(it\la_j)z_j'],
\ldots, z_j+\exp(it\la_j)z'_j, \ldots \Bigr) .$$
It is Lie algebra is $\G_\la$ is $\R \times \R\times \C^n$ with its canonical basis
$\B = \left\{e_{-1},e_0,e_j,\ch_j\right\}_{j=1,\ldots,n}$ with
\[ e_{-1}=(1,0,0),e_0=(0,1,0),\; e_j=(0,0,(0,\ldots,1,\ldots,0))\esp \ch_j=
(0,0,(0,\ldots,\imath,\ldots,0)). \]
For any $u\in\G_\la$, we denote by $u^\ell$ the left invariant field on $G_\la$ associated to $u$. We have
\[ u^\ell(t,s,z)=\frac{d}{d\mu}_{|\mu=0}(t,s,z).(\mu u). \]So if $u=(t_0,s_0,z^0)$ we get
\begin{eqnarray*}
	u^\ell&=&\frac{d}{d\mu}_{|\mu=0}
	\Bigl( t+\mu t_0,s+\mu s_0+\frac12 \sum_{j=1}^n \mbox{Im }(\bar{z}_j \mu\exp(it\la_j)z^0_j),
	\ldots, z_j+\mu\exp(it\la_j)z^0_j, \ldots \Bigr) \\
	&=&\Bigl(  t_0, s_0+\frac12 \sum_{j=1}^n \mbox{Im }(\bar{z}_j \exp(it\la_j)z^0_j),
	\ldots, \exp(it\la_j)z^0_j, \ldots \Bigr).
\end{eqnarray*}In the linear coordinates $(t,s,x_j,y_j)_{j=1,\ldots,n}$ associated to $\B$ we get
\begin{eqnarray*}
	u^\ell&=&t_0\partial_{t}+\left[s_0+\frac12 \sum_{j=1}^n \mbox{Im }(\bar{z}_j \exp(it\la_j)z^0_j)\right]\partial_{s}\\&&+
	\sum_{j=1}^n\left[ (x^0_j\cos(t\la_j)-y^0_j\sin(t\la_j))\partial_{x_j}
	+(x^0_j\sin(t\la_j)+y^0_j\cos(t\la_j))\partial_{y_j} \right],\end{eqnarray*}where
$u=(t_0,s_0,x_j^0,y_j^0)$. We deduce that
\begin{equation}\label{eq}\left\{\begin{array}{l}\di
	e_{-1}^\ell=\partial_{t},\;e_0^\ell=\partial_{s},\\\di
	e_j^\ell=\frac12(x_j\sin(t\la_j)-y_j\cos(t\la_j))\partial_{s}+
	\cos(t\la_j)\partial_{x_j}
	+\sin(t\la_j)\partial_{y_j},\\\di
	\ch_j^\ell=\frac12(x_j\cos(t\la_j)+y_j\sin(t\la_j))\partial_{s}-
	\sin(t\la_j)\partial_{x_j}
	+\cos(t\la_j)\partial_{y_j}.\end{array}\right.
\end{equation}From this relations we deduce that
\begin{equation}\label{eq2}\left\{\begin{array}{l}\di
	\partial_t=e_{-1}^\ell,\;\partial_s=e_0^\ell,\\\di
	\partial_{x_j}=\frac12y_je_0^\ell+ \cos(t\la_j)e_j^\ell-\sin(t\la_j)\ch_j^\ell,\\\di
	\partial_{y_j}=-\frac12x_je_0^\ell+ \sin(t\la_j)e_j^\ell+\cos(t\la_j)\ch_j^\ell.
\end{array}\right.
\end{equation}

Let $h_\la$ denote the Lorentzian left invariant metric on $G_\la$ associated to $\textbf{k}_\la$. We have
\[ h_\la(e_{-1}^\ell,e_0^\ell)=1\esp h_\la(e_i^\ell,e_i^\ell)=
h_\la(\ch_i^\ell,\ch_i^\ell)=\frac1{\la_i}\]and all the other products are zero. Thus
\[ h_\la=2dtds+\left[\sum_{i=1}^n(y_idx_i-x_idy_i)\right]dt+\sum_{i=1}^n\frac1{\la_i}(dx_i^2+dy_i^2). \]The Levi-Civita $\na^0$ of $h_\la$ is given by
\begin{equation}\label{lv}\na_{u^\ell}^0 v^\ell=\frac12[u^\ell,v^\ell],\quad u,v\in\G_\la.\end{equation} A direct computation using \eqref{eq}-\eqref{lv} shows that expression of $\na^0$ in the coordinates $(t,s,x_i,y_i)$ is quite simple and given by
\begin{equation}\label{lv1}
	\na_{\partial_{t}}^0\partial_{x_j}=\na_{\partial_{x_j}}^0\partial_{t}=-\frac{\la_j}{2}\left(\frac12 x_j\partial_s+\partial_{y_j}\right)\esp
	\na_{\partial_{t}}^0\partial_{y_j}=\na_{\partial_{y_j}}^0\partial_{t}=-\frac{\la_j}{2}\left(\frac12 y_j\partial_s-\partial_{x_j}\right),\;
\end{equation}$j=1,\ldots,n,$ and all the other Christofell symbols vanishes.

Let $\circ$ the Poisson product on $\G_\la$ defined in Theorem \ref{poisson}. By virtue of \cite[Theorems 2.1 and 3.1]{bb}, the connection $\na$ on $G_\la$ given by
\[ \na_{u^\ell} v^\ell=\na^0_{u^\ell}v^\ell+(u\circ v)^\ell \]is bi-invariant, locally symmetric and has the same curvature and holonomy as $\na^0$. According to \eqref{lv1} and the expression of $\circ$, $\na$ is given in the coordinates $(t,s,x_i,y_i)$ by
\begin{eqnarray*}\label{lv2}
\na_{\partial_{t}}\partial_{x_j}&=&\na_{\partial_{x_j}}\partial_{t}=-\frac{\la_j}{2}\left(\frac12 x_j\partial_s+\partial_{y_j}\right),\\
\na_{\partial_{t}}\partial_{y_j}&=&\na_{\partial_{y_j}}\partial_{t}=-\frac{\la_j}{2}\left(\frac12 y_j\partial_s-\partial_{x_j}\right),\\
\na_{\partial_{t}}\partial_{t}&=&c\partial_s,
\end{eqnarray*}$j=1,\ldots,n,$ and all the other Christofell symbols vanishes.

\end{document}